\newtheorem{theorem}{Theorem}[section]
\theoremstyle{definition}
\newtheorem{definition}[theorem]{Definition}
\newtheorem{remark}[theorem]{Remark}
\newtheorem{corollary}[theorem]{Corollary}
\newtheorem{condition}[theorem]{Condition}
\numberwithin{equation}{section}
\begin{document}
\title{Notes on the quasi-galois closed schemes}
\author{Feng-Wen An}
\address{School of Mathematics and Statistics, Wuhan University, Wuhan,
Hubei 430072, People's Republic of China} \email{fwan@amss.ac.cn}
\subjclass[2000]{Primary 14H30}
\keywords{affine covering, quasi-galois}

\begin{abstract}
Let $f:X\rightarrow Y$ be a surjective morphism of integral schemes. Then $X$ is said to be quasi-galois closed over $Y$ by $f$ if $X$ has a unique conjugate over
$Y$ in an algebraically closed field. Such a notion has been applied to the computation of \'{e}tale fundamental groups. In this paper we will use affine coverings with values in a fixed field to discuss quasi-galois closed and then give
a sufficient and essential condition for quasi-galois closed. Here, we will avoid using affine structures on a scheme since their definition looks copious and fussy.
\end{abstract}

\maketitle





{\tiny{
\begin{center}
{Contents}
\end{center}

\qquad {Introduction}

\qquad {1. Preliminaries}

\qquad {2. Definition for \emph{qc} schemes}

\qquad {3. A condition equivalent to \emph{qc} schemes}

\qquad {References}}}

\section*{Introduction}

The quasi-galois closed schemes are introduced and then discussed by affine structures in \cite{An2}. That is, let $X/Y$ be integral schemes with a surjective morphism $f$. Then $X$ is said to be quasi-galois closed (or \emph{qc} for short) over  $Y$ by $f$ if $X$ has a unique conjugate over $Y$ in an algebraically closed field. The \emph{qc}  schemes behave like
quasi-galois extensions of fields and have many nice properties, which can be regarded as a generalization of pseudo-galois schemes in the sense of Suslin-Voevodsky (see \cite{VS1,SV2}).

The notion of \emph{qc} schemes introduced there is mainly for us to understand the \'{e}tale fundamental group of a scheme. In deed, by \emph{qc} schemes we have the computation of \'{e}tale fundamental groups of schemes (see \cite{An4,An6}) and get a splitting homotopy exact sequence of these profinite groups in the sense of Grothendieck (see \cite{An4}). By \emph{qc} covers of a scheme, we will also obtain a profinite group, a \emph{qc} fundamental group of the scheme that contains the \'{e}tale fundamental group as a normal subgroup (see \cite{An5,An6}).

However, the definition for affine structures on a scheme looks rather copious and fussy. In this paper we will avoid using affine structures to get the key property for \emph{qc} schemes. Instead, we will use affine coverings  with values in an algebraically closed field to discuss \emph{qc} schemes and then obtain
a sufficient and essential condition for a \emph{qc} scheme.

\subsection*{Acknowledgements}

The author would like to express his sincere gratitude to Professor
Li Banghe for his advice and instructions on algebraic
geometry and topology.

\section{Preliminaries}

\subsection{Conventions}

As usual, for an  integral scheme $X$, we
let $k(X)$ denote the function field of $X$.

For an integral domain $D$, let $Fr(D)$ denote the field of
fractions of $ D $. In particular, $Fr(D)$ will be assumed to be contained in $\Omega$ if $D$ is contained in a field
$\Omega$.

\subsection{Affine covering with values}

Let $X$ be a scheme. An
\textbf{affine covering} of $X$ is a
family $\mathcal{C}_{X}=\{(U_{\alpha },\phi _{\alpha };A_{\alpha
})\}_{\alpha \in \Delta }$, where for each $\alpha \in \Delta $,
 $\phi _{\alpha }$ is an isomorphism from an open set $U_{\alpha
}$ of $X$ onto the spectrum $Spec{A_{\alpha }}$ of a
commutative ring $A_{\alpha }$.

Each $(U_{\alpha
},\phi _{\alpha };A_{\alpha })\in \mathcal{C}_{X}$ is called a
\textbf{local chart}. For the sake of brevity, a local chart
$(U_{\alpha},\phi_{\alpha};A_{\alpha })$  will be denoted
by $U_{\alpha}$ or $(U_{\alpha},\phi_{\alpha})$.

An affine covering
$\mathcal{C}_{X}$ of
$(X, \mathcal{O}_{X})$ is said to be \textbf{reduced} if
$U_{\alpha}\neq U_{\beta} $ holds for any $\alpha\neq \beta$ in
$\Delta$.

Let $\mathfrak{Comm}$ be the category of commutative rings with
identity. For a given field $\Omega$, let
$\mathfrak{Comm}(\Omega)$ be the category consisting of the subrings
of $\Omega$ and their isomorphisms.

\begin{definition}
Let $\mathfrak{Comm}_{0}$ be a subcategory of
$\mathfrak{Comm}$. An affine covering
$\{(U_{\alpha},\phi_{\alpha};A_{\alpha })\}_{\alpha \in \Delta}$ of
$X$ is said to be \textbf{with values} in
$\mathfrak{Comm}_{0}$ if
 for each $\alpha \in \Delta $ there are $\mathcal{O}_{X}(U_{\alpha})=A_{\alpha}$ and $U_{\alpha}=Spec(A_{\alpha})$, where
 $A_{\alpha }$ is a ring contained in $\mathfrak{Comm}_{0}$.

In particular, let $\Omega$ be a field. An affine covering
$\mathcal{C}_{X}$ of $X$ with values in
$\mathfrak{Comm}(\Omega)$ is said to be \textbf{with values in the
field $\Omega$}.
\end{definition}

\begin{remark}
The affine open subschemes of a given scheme are usually flexible and unspecified. Here, the field $\Omega$ above enables the affine open subschemes of $X$ to be \emph{measurable} while the other open subschemes of $X$ are still \emph{unmeasurable}.
\end{remark}

\section{Definition for \emph{qc} Schemes}

 Assume that $\mathcal{O}_{X}$ and $\mathcal{O}^{\prime}_{X}$ are two structure sheaves on the underlying space of an integral scheme $X$. The two integral schemes $(X,\mathcal{O}_{X})$ and $(X, \mathcal{O}^{\prime}_{X})$ are said to be \textbf{essentially equal} provided that for any open set $U$ in $X$, we have
 $$U \text{ is affine open in }(X,\mathcal{O}_{X}) \Longleftrightarrow \text{ so is }U \text{ in }(X,\mathcal{O}^{\prime}_{X})$$ and in such a case,  $D_{1}=D_{2}$ holds or  there is $Fr(D_{1})=Fr(D_{2})$ such that for any nonzero $x\in Fr(D_{1})$, either $$x\in D_{1}\bigcap D_{2}$$ or $$x\in D_{1}\setminus D_{2} \Longleftrightarrow x^{-1}\in D_{2}\setminus D_{1}$$ holds, where $D_{1}=\mathcal{O}_{X} (U)$ and $D_{2}=\mathcal{O}^{\prime}_{X} (U)$.

 Two schemes $(X,\mathcal{O}_{X})$ and $(Z,\mathcal{O}_{Z})$ are said to be \textbf{essentially equal} if the underlying spaces of $X$ and $Z$ are equal and the schemes $(X,\mathcal{O}_{X})$ and $(X,\mathcal{O}_{Z})$ are essentially equal.

Let $X$ and $Y$ be integral schemes and let $f:X\rightarrow Y$ be a
surjective morphism of finite type. Denote by
$Aut\left( X/Y\right)$ the automorphism group of $X$ over
$Y$.

A integral scheme $Z$ is
said to be a \textbf{conjugate} of $X$ over $Y$ if there is an
isomorphism $\sigma :X\rightarrow Z$ over $Y$.

\begin{definition}
$X$ is said to be \textbf{quasi-galois closed} (or \textbf{\emph{qc}} for short)  over $Y$
by $f$ if  there is an algebraically closed field $\Omega$
and a reduced affine covering $\mathcal{C}_{X}$ of $X$ with values in $
\Omega $ such that for any conjugate $Z$ of
$X$ over $Y$ the two conditions are satisfied:
\begin{itemize}
\item $(X,\mathcal{O}_{X})$ and $(Z,\mathcal{O}_{Z})$ are essentially equal if $Z$ has a reduced
affine covering with values in $\Omega$.

\item $\mathcal{C}_{Z}\subseteq \mathcal{C}_{X}$ holds if $\mathcal{C}_{Z}$
is a reduced affine covering of $Z$ with values in $\Omega $.
\end{itemize}
\end{definition}

There are many \emph{qc} schemes. For example, let $t$ be a variable over $\mathbb{Q}$. Then $Spec(\mathbb{Z}[2^{1/2},t^{1/3}])$ is \emph{qc} over $Spec(\mathbb{Z})$, where we take $\Omega=\overline{\mathbb{Q}(2^{1/2},t^{1/3})}$.

\begin{remark} Now let us give some notes on the algebraically closed field $\Omega$ in \emph{Definition 2.1}.

$(i)$ By $\Omega$, the rings of affine open sets in $X$ are taken to be as subrings of the same ring $\Omega$ so that they can be compared with each other.

$(ii)$ By $\Omega$, we can restrict ourselves only to consider the function fields which have the same variables over a given field.
\end{remark}

\begin{remark}
The affine covering $\mathcal{C}_{X}$ of $X$ in \emph{Definition 2.1} is maximal by set inclusion, which is indeed the \emph{natural affine structure} of $X$ with values in $\Omega$. Conversely, it can be seen that a \emph{qc} integral scheme has a unique natural affine structure with values in $\Omega$ (see \cite{An2}).
\end{remark}

\begin{remark}
It is seen that \emph{qc} schemes behave always like quasi-galois extensions of fields (see \cite{An2}). In general, there are an infinite number of \emph{qc} schemes over a given integral scheme. See \cite{An3,An4,An5} for the criterion and existence of \emph{qc} schemes.
\end{remark}

\section{A Condition Equivalent to \emph{qc} Schemes}

Let $K$ be an extension of a field $k$. Note that here $K$ is not necessarily algebraic over $k$.

\begin{definition}
$K$ is said to be \textbf{quasi-galois} over $k$ if each
irreducible polynomial $f(X)\in F[X]$ that has a root in $K$ factors
completely in $K\left[ X\right] $ into linear factors for any subfield $F$ with $k\subseteq F\subseteq K$.
\end{definition}

Let $D\subseteq D_{1}\cap D_{2}$ be three integral domains. Then $D_{1}$
is said to be \textbf{quasi-galois} over $D$ if $Fr\left(
D_{1}\right) $ is quasi-galois over $Fr\left( D\right) $.

\begin{definition}
$D_{1}$ is said to be a \textbf{conjugation} of $D_{2}$ over $D$ if
there is an $F-$isomorphism $\tau:Fr(D_{1})\rightarrow
Fr(D_{2})$ such that
$
\tau(D_{1})=D_{2}$,
where $F\triangleq k(\Delta)$, $k\triangleq Fr(D)$, $\Delta$ is a transcendental basis of the field $Fr(D_{1})$ over $k$, and $F$ is contained in $Fr(D_{1})\cap Fr(D_{2})$.
\end{definition}

Now let $X$ and $Y$ be two integral schemes and let $f:X\rightarrow Y$ be a
surjective morphism of finite type. Fixed an algebraic closure $\Omega$ of the function field $k(X)$.

\begin{definition}
A reduced affine covering $\mathcal{C}_{X}$ of $X$ with values in $\Omega $
is said to be \textbf{quasi-galois closed} over $Y$ by $f$ if the
 condition below is satisfied:

There exists a local chart $(U_{\alpha }^{\prime },\phi _{\alpha }^{\prime
};A_{\alpha }^{\prime })\in \mathcal{C}_{X}$ such that $U_{\alpha }^{\prime
}\subseteq \varphi^{-1}(V_{\alpha})$ holds for any $(U_{\alpha },\phi _{\alpha
};A_{\alpha })\in \mathcal{C}_{X}$, for any affine open set $V_{\alpha}$ in $%
Y$ with $U_{\alpha }\subseteq f^{-1}(V_{\alpha})$, and for any
conjugate $A_{\alpha }^{\prime }$ of $A_{\alpha }$ over $B_{\alpha}$, where $%
B_{\alpha}$ is the canonical image of $\mathcal{O}_{ Y}(V_{\alpha})$ in the
function field $k(X)$ via $f$.
\end{definition}

\begin{definition}
An affine covering $\{(U_{\alpha },\phi _{\alpha };A_{\alpha })\}_{\alpha
\in \Delta }$ of $X$ is said to be an \textbf{affine
patching} of $X$ if the map $\phi _{\alpha }$ is the
identity map on $U_{\alpha }=SpecA_{\alpha }$ for each $\alpha \in \Delta .$
\end{definition}

Evidently, an affine patching is reduced.

\begin{theorem}
Let $X$ be {qc}  over $Y$ by $f$ and let the function field $k(Y)$ be contained in $\Omega$.  Then there is a unique
maximal affine covering $\mathcal{C}_{X}$ of $X$ with values in $\Omega $
such that $\mathcal{C}_{X}$ is quasi-galois closed over $Y$ by $f$.
\end{theorem}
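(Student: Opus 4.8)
The plan is to take for $\mathcal{C}_{X}$ the affine patching consisting of \emph{every} affine open subset of $X$, each carrying its canonical ring, and to prove that this single covering is at once reduced, maximal among coverings with values in $\Omega$, and quasi-galois closed; uniqueness will then follow formally from maximality. To set up the ambient identifications, note that since $X$ is integral and $f$ is dominant, the comorphism furnishes canonical embeddings $k(Y)\hookrightarrow k(X)\hookrightarrow \Omega$, and for every affine open $U\subseteq X$ the ring $\mathcal{O}_{X}(U)$ sits canonically inside $k(X)\subseteq\Omega$. Hence each affine open $U$ determines exactly one local chart with values in $\Omega$, namely $(U,\mathrm{id};\mathcal{O}_{X}(U))$; I let $\mathcal{C}_{X}$ be the family of all of these. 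By construction $\mathcal{C}_{X}$ is an affine patching, so it is reduced, and it visibly contains every local chart of $X$ with values in $\Omega$. Thus any affine covering of $X$ with values in $\Omega$ is a subfamily of $\mathcal{C}_{X}$, which already yields maximality.

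The heart of the argument is to verify the closure condition of \emph{Definition 3.3}. So I fix a chart $(U_{\alpha},\mathrm{id};A_{\alpha})\in\mathcal{C}_{X}$, an affine open $V_{\alpha}\subseteq Y$ with $U_{\alpha}\subseteq f^{-1}(V_{\alpha})$, write $B_{\alpha}$ for the image of $\mathcal{O}_{Y}(V_{\alpha})$ in $k(X)$, and take an arbitrary conjugate $A_{\alpha}^{\prime}$ of $A_{\alpha}$ over $B_{\alpha}$, realized by an $F$-isomorphism $\tau:Fr(A_{\alpha})\rightarrow Fr(A_{\alpha}^{\prime})$ with $\tau(A_{\alpha})=A_{\alpha}^{\prime}$. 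Since $U_{\alpha}$ is dense, $Fr(A_{\alpha})=k(X)$, and because $F=k(\Delta)$ contains a transcendence basis of $k(X)$ over $k=Fr(B_{\alpha})$, the field $k(X)$ is algebraic over $F$; as $\Omega=\overline{k(X)}=\overline{F}$ is algebraically closed, $\tau$ extends to an automorphism $\widetilde{\tau}$ of $\Omega$ fixing $F$, hence fixing $B_{\alpha}$. Transporting the structure sheaf of $X$ through $\widetilde{\tau}$ then produces a scheme $Z$ together with an isomorphism $X\rightarrow Z$ over $Y$, that is, a conjugate of $X$ over $Y$, and $\widetilde{\tau}$ carries $\mathcal{C}_{X}$ to a reduced affine covering $\mathcal{C}_{Z}$ of $Z$ with values in $\Omega$.

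Now I invoke the hypothesis that $X$ is \emph{qc} over $Y$: by \emph{Definition 2.1}, $X$ and $Z$ are essentially equal and $\mathcal{C}_{Z}\subseteq\mathcal{C}_{X}$. Consequently $A_{\alpha}^{\prime}=\widetilde{\tau}(A_{\alpha})=\mathcal{O}_{X}(U_{\alpha}^{\prime})$ for some affine open $U_{\alpha}^{\prime}$ of $X$, so $(U_{\alpha}^{\prime},\mathrm{id};A_{\alpha}^{\prime})\in\mathcal{C}_{X}$; and since $\widetilde{\tau}$ fixes $B_{\alpha}$ it is compatible with $f$ over $V_{\alpha}$, forcing $U_{\alpha}^{\prime}\subseteq f^{-1}(V_{\alpha})$. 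This is precisely the required closure condition, so $\mathcal{C}_{X}$ is quasi-galois closed over $Y$ by $f$. For uniqueness, any quasi-galois closed covering with values in $\Omega$ is a subfamily of the all-opens covering $\mathcal{C}_{X}$; since $\mathcal{C}_{X}$ is itself quasi-galois closed, a maximal such covering cannot be proper, so it coincides with $\mathcal{C}_{X}$. The main obstacle is the passage from the abstract ring conjugation $\tau$ to a genuine scheme conjugate $Z$ to which the \emph{qc} hypothesis applies, together with checking that the transported covering lands with values in $\Omega$; this is exactly where the assumption $k(Y)\subseteq\Omega$ and the algebraic nature of the conjugation encoded in $F=k(\Delta)$ are indispensable.
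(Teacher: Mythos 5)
Your proposal cannot really be compared against the paper's own argument, because the paper's proof of this theorem consists of the single word ``Trivial''; you have supplied the content that the paper omits, and the substance of what you wrote is the argument the paper presumably intends. Your key steps are sound: taking $\mathcal{C}_{X}$ to be the patching by all affine opens with their canonical rings inside $k(X)\subseteq\Omega$; observing that Definition 1.1 forces $A_{\alpha}=\mathcal{O}_{X}(U_{\alpha})$, so this covering is automatically maximal and unique; and, for the quasi-galois closed condition, using that $k(X)=Fr(A_{\alpha})$ is algebraic over $F=k(\Delta)$ to extend the conjugation $\tau$ to an $F$-automorphism $\widetilde{\tau}$ of $\Omega=\overline{F}$, transporting the structure sheaf to obtain a scheme conjugate $Z$ of $X$ over $Y$, and then invoking the second bullet of Definition 2.1. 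Three soft spots, all traceable to the paper's definitions rather than to your reasoning, deserve a remark. First, Definition 3.2 does not actually require the conjugate ring $A_{\alpha}^{\prime}$ (or $Fr(A_{\alpha}^{\prime})$) to lie inside $\Omega$, yet your extension of $\tau$ to $\widetilde{\tau}$ needs this; you should state explicitly that conjugates are taken inside $\Omega$, which is in any case forced if the conclusion of Definition 3.3 is to be satisfiable. Second, Definition 2.1 only posits \emph{some} algebraically closed field witnessing the \emph{qc} property, and you tacitly identify it with the fixed algebraic closure $\Omega$ of $k(X)$ from Section 3; a sentence justifying this identification (every covering with values in any field has rings inside $k(X)$, so the witnessing field contains a copy of $\overline{k(X)}$) would close that gap. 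Third, your final step can be shortened: since $\mathcal{C}_{X}$ is reduced and the transported chart of $\mathcal{C}_{Z}$ lives over the same underlying open set $U_{\alpha}$, the inclusion $\mathcal{C}_{Z}\subseteq\mathcal{C}_{X}$ forces $A_{\alpha}^{\prime}=A_{\alpha}$, so one may simply take $U_{\alpha}^{\prime}=U_{\alpha}$ and the containment $U_{\alpha}^{\prime}\subseteq f^{-1}(V_{\alpha})$ is the hypothesis itself, with no need for the separate compatibility argument you give.
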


\begin{proof}
Trivial.
\end{proof}

It follows that we have the following corollaries.

\begin{corollary}
Let  $\mathcal{C}_{X}$ and $\Omega$ be assumed as in
\emph{Definition 2.1}. Then there are the following statements.
 \begin{itemize}
\item The field $\Omega$ is an algebraic closure of the function field $k(X)$.

 \item The affine covering $\mathcal{C}_{X}$ is a unique maximal affine covering with values in $\Omega$ that is quasi-galois closed over $Y$ by $f$.
 \end{itemize}
\end{corollary}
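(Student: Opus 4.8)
The plan is to derive both statements from Theorem 3.5 together with the defining properties of the witnessing pair $(\mathcal{C}_{X},\Omega)$ in Definition 2.1. I would organize the argument around two essentially independent points: first pinning down the field $\Omega$, and then pinning down the covering $\mathcal{C}_{X}$.

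For the first bullet I would start from the fact that $\mathcal{C}_{X}$ has values in $\Omega$, so each local chart $(U_{\alpha},\phi_{\alpha};A_{\alpha})$ satisfies $A_{\alpha}\subseteq\Omega$, $\mathcal{O}_{X}(U_{\alpha})=A_{\alpha}$, and $U_{\alpha}=Spec(A_{\alpha})$. Since $X$ is integral, $Fr(A_{\alpha})=k(X)$, and as $\Omega$ is a field we get $k(X)=Fr(A_{\alpha})\subseteq\Omega$; because $\Omega$ is algebraically closed it therefore contains a copy of the algebraic closure $\overline{k(X)}$. It remains to show $\Omega$ is algebraic over $k(X)$. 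Here I would argue by contradiction using the uniqueness of the conjugate built into Definition 2.1: if some $t\in\Omega$ were transcendental over $k(X)$, then, choosing a transcendence basis of $k(X)$ over $F=k(Y)$ (finite because $f$ is of finite type) and sending one basis element to $t$, I would produce an $F$-embedding $\tau:k(X)\hookrightarrow\Omega$ with $\tau(k(X))\neq k(X)$ as subfields of $\Omega$. Transporting the structure sheaf through $\tau$ yields a conjugate $Z$ of $X$ over $Y$ carrying a reduced affine covering with values in $\Omega$, yet with $Fr(\mathcal{O}_{Z}(U))\neq Fr(\mathcal{O}_{X}(U))$; this contradicts the essential-equality clause of Definition 2.1. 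Hence no such $t$ exists and $\Omega=\overline{k(X)}$, i.e.\ $\Omega$ is an algebraic closure of $k(X)$.

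For the second bullet I would first note that $k(Y)\hookrightarrow k(X)\subseteq\Omega$, so the hypothesis $k(Y)\subseteq\Omega$ of Theorem 3.5 is automatically met; Theorem 3.5 then furnishes a unique maximal affine covering $\mathcal{C}'$ of $X$ with values in $\Omega$ that is quasi-galois closed over $Y$ by $f$ in the sense of Definition 3.3. To conclude I would identify $\mathcal{C}_{X}$ with $\mathcal{C}'$: by Remark 2.3 the covering $\mathcal{C}_{X}$ is the natural affine structure of $X$ with values in $\Omega$ and is maximal by inclusion, while the \emph{qc} property of $X$ in Definition 2.1 translates, chart by chart, into the local-chart condition of Definition 3.3 (every chart sits inside the preimage of an affine open of $Y$ and absorbs every conjugation of its coordinate ring over $B_{\alpha}$). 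Since $\mathcal{C}_{X}$ is thus a maximal \emph{qc} covering with values in $\Omega$, the uniqueness in Theorem 3.5 forces $\mathcal{C}_{X}=\mathcal{C}'$, which is exactly the assertion.

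I expect the main obstacle to be the algebraicity step in the first bullet: specifically, verifying that the transcendental twist $\tau$ really produces a legitimate conjugate $Z$ over $Y$ whose reduced affine covering has values in $\Omega$ while genuinely failing essential equality, and handling the degenerate case in which $k(X)/k(Y)$ is already algebraic, so that no transcendence-basis element is available to move to $t$ and the conjugate condition cannot directly detect a transcendental in $\Omega$. The remainder is bookkeeping, namely reconciling the two formulations of quasi-galois closedness (Definition 2.1 via conjugates and essential equality, Definition 3.3 via local charts and ring conjugations) so that Theorem 3.5 applies to $\mathcal{C}_{X}$ itself.
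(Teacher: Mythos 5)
Your proposal is incomplete at exactly the point you flag as ``the main obstacle,'' and that obstacle is not a technicality you can defer --- it is the whole content of the first bullet. Your argument that $\Omega$ is algebraic over $k(X)$ proceeds by twisting a transcendence-basis element of $k(X)$ over $k(Y)$ onto a hypothetical $t\in\Omega$ transcendental over $k(X)$, and then invoking the essential-equality clause of Definition 2.1 against the resulting conjugate $Z$. But every conjugate of $X$ over $Y$ has function field that is a $k(Y)$-isomorphic copy of $k(X)$ inside $\Omega$; when $k(X)$ is algebraic over $k(Y)$ (transcendence degree zero, e.g.\ $f$ finite), all such copies lie in $\overline{k(Y)}=\overline{k(X)}\subseteq\Omega$, so no conjugate can ever ``see'' an element of $\Omega$ transcendental over $k(X)$, and neither condition of Definition 2.1 is violated by enlarging $\Omega$. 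In that case your contradiction simply cannot be derived, and you have not shown $\Omega=\overline{k(X)}$; you have only shown $\overline{k(X)}\subseteq\Omega$. Since you explicitly leave this case unresolved, the proof of the first bullet is genuinely open (and, read literally, Definition 2.1 does not appear to force algebraicity of $\Omega$ over $k(X)$ at all, so any complete argument would have to identify and use an implicit normalization that the paper does not make explicit). Note also that the first bullet is a prerequisite for the second: Theorem 3.5 is stated for the $\Omega$ fixed in Section 3 as an algebraic closure of $k(X)$, so you cannot apply it to the $\Omega$ of Definition 2.1 until the first bullet is secured.

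For comparison, the paper gives no proof of this corollary beyond the sentence ``It follows that we have the following corollaries'' after Theorem 3.5, whose own proof is the single word ``Trivial''; so there is no argument in the paper against which to reconcile yours. Your second bullet is reasonable in outline (maximality of $\mathcal{C}_X$ from the second condition of Definition 2.1 applied to $Z=X$, then uniqueness from Theorem 3.5), though the ``chart by chart'' translation from conjugate schemes to conjugations of coordinate rings is precisely the nontrivial content of Theorem 3.5 and would need to be written out rather than asserted.
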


\begin{corollary}
Any \emph{qc} integral scheme has a unique maximal
affine structure  with values in an algebraic closure of its function field (see \cite{An,An2}).
 \end{corollary}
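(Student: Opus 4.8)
The plan is to read this off from Corollary 3.6 and Theorem 3.5, together with the identification of the natural affine structure recorded in Remark 2.3. Since $X$ is \emph{qc}, it is \emph{qc} over some integral scheme $Y$ by a surjective morphism $f$ of finite type, so Definition 2.1 supplies an algebraically closed field $\Omega$ and a reduced affine covering $\mathcal{C}_{X}$ of $X$ with values in $\Omega$ fulfilling the two conjugacy conditions. By the first assertion of Corollary 3.6, this $\Omega$ is an algebraic closure of the function field $k(X)$; this already identifies the field in which the desired affine structure is to take values.

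Next I would verify that the hypotheses of Theorem 3.5 are met, namely that $k(Y)\subseteq\Omega$. Because $f$ is surjective and $Y$ is integral, $f$ is dominant, so the generic point of $X$ maps to the generic point of $Y$ and $f$ induces an inclusion $k(Y)\hookrightarrow k(X)$ of function fields; combined with $k(X)\subseteq\Omega$ from the previous step this gives $k(Y)\subseteq\Omega$. Theorem 3.5 then provides a unique maximal affine covering of $X$ with values in $\Omega$ that is quasi-galois closed over $Y$ by $f$, and the second assertion of Corollary 3.6 identifies this covering with $\mathcal{C}_{X}$. By Remark 2.3 the covering $\mathcal{C}_{X}$ is maximal under set inclusion and is precisely the natural affine structure of $X$ with values in $\Omega$, which yields both the existence and the uniqueness that are claimed.

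The only point that warrants care, rather than being a genuine obstacle, is to confirm that the resulting maximal affine structure is intrinsic to $X$ and $\Omega$, independent of the auxiliary data $Y$ and $f$ that witness the \emph{qc} property; this is what licenses the phrase ``affine structure of the scheme''. I would argue this by describing the natural affine structure as the collection of all local charts $(U,\phi;A)$ with $\mathcal{O}_{X}(U)=A\subseteq\Omega$ and $U=\mathrm{Spec}(A)$---a description that does not mention $Y$ or $f$---and by noting that the quasi-galois closed hypothesis enters only to guarantee that this collection is itself a reduced affine covering realizing Definition 2.1. With these identifications fixed, the uniqueness furnished by Theorem 3.5 and Corollary 3.6 transfers verbatim, and the corollary follows.
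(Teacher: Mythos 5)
Your proposal is correct and follows essentially the same route as the paper, which derives this corollary (without further argument) from Theorem 3.5 via the remark ``it follows that we have the following corollaries''; your derivation through Theorem 3.5, Corollary 3.6, and Remark 2.3 simply makes that dependence explicit. The extra details you supply --- the inclusion $k(Y)\hookrightarrow k(X)\subseteq\Omega$ from dominance of $f$, and the observation that the maximal covering is intrinsic to $X$ and $\Omega$ rather than to the auxiliary data $Y$ and $f$ --- are sound and, if anything, more careful than the paper's own treatment.
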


 Finally, we give a sufficient and essential condition for \emph{qc} schemes.

\begin{condition}
Let $(W,\mathcal{O}_{W})$ be a scheme. For any open set $U$ in $W$, we identify the stalk of $\mathcal{O}_{W}$ at a point $x \in U$
with the stalk of the restriction of $\mathcal{O}_{W}$ to $U$ at the point $x$.
\end{condition}

\begin{theorem}
Let $X,Y$ be integral schemes and let $f :X\rightarrow Y$ be a surjective morphism of finite type.
Suppose that the function field $k(Y)$ is contained in $\Omega$. Then under Condition 3.6, the following statements are equivalent:
\begin{itemize}
\item The scheme $X$ is
qc over $Y$ by $f$.

\item There is a unique maximal affine
patching $\mathcal{C}_{X}$ of $X$ with values in $\Omega $ such that $\mathcal{C}_{X}$ is quasi-galois closed over $Y$ by $f$.
\end{itemize}
\end{theorem}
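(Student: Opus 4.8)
The plan is to prove the two implications separately: for the forward direction I would lean on Theorem 3.5 together with the Corollary recording that $\Omega$ is an algebraic closure of $k(X)$, while for the converse I would verify the two bullets of Definition 2.1 directly. Throughout, Condition 3.6 is the device that promotes an affine covering to an affine patching.

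First I would show that qc implies the existence of a unique maximal qc affine patching. Since $X$ is qc over $Y$ by $f$ and $k(Y)\subseteq\Omega$, Theorem 3.5 furnishes a unique maximal affine covering $\mathcal{C}_{X}$ with values in $\Omega$ that is quasi-galois closed over $Y$ by $f$. To convert this into a patching I would invoke Condition 3.6: for each local chart $(U_{\alpha},\phi_{\alpha};A_{\alpha})$ the hypothesis ``with values in $\Omega$'' gives $\mathcal{O}_{X}(U_{\alpha})=A_{\alpha}$ and $U_{\alpha}=\mathrm{Spec}(A_{\alpha})$ as sets, while Condition 3.6 identifies, for each $x\in U_{\alpha}$, the stalk $\mathcal{O}_{X,x}$ with the localization of $A_{\alpha}$ at the prime corresponding to $x$. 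This identification is canonical and compatible on overlaps, so $\phi_{\alpha}$ may be taken to be the identity on $U_{\alpha}=\mathrm{Spec}(A_{\alpha})$; hence $\mathcal{C}_{X}$ is an affine patching, and its uniqueness and maximality are inherited verbatim from Theorem 3.5.

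For the converse I would assume the existence of a unique maximal qc affine patching $\mathcal{C}_{X}$ and check Definition 2.1. Let $Z$ be any conjugate of $X$ over $Y$, with isomorphism $\sigma:X\rightarrow Z$ over $Y$, and suppose $Z$ carries a reduced affine covering $\mathcal{C}_{Z}$ with values in $\Omega$. Transporting the charts of $Z$ along $\sigma$ and invoking the qc property of $\mathcal{C}_{X}$ (Definition 3.3) produces, for each chart of $Z$, a dominating local chart $(U_{\alpha}^{\prime},\phi_{\alpha}^{\prime};A_{\alpha}^{\prime})\in\mathcal{C}_{X}$; since both sides are affine patchings with values in the single field $\Omega$, this forces the underlying spaces of $X$ and $Z$ to coincide. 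Because $\Omega$ is an algebraic closure of $k(X)$ (as recorded in the Corollary above) and the pertinent function fields are quasi-galois over the base fields $Fr(B_{\alpha})$, every conjugate ring $A_{\alpha}^{\prime}$ of $A_{\alpha}$ over $B_{\alpha}$ already lies in $\Omega$ and is comparable with $A_{\alpha}$ inside $\Omega$; this comparability is exactly the condition defining essential equality, so $(X,\mathcal{O}_{X})$ and $(Z,\mathcal{O}_{Z})$ are essentially equal. The inclusion $\mathcal{C}_{Z}\subseteq\mathcal{C}_{X}$ then follows from maximality, since each chart of $Z$ pulls back under $\sigma$ to a chart with values in $\Omega$ already present in the maximal patching.

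The hard part will be the essential-equality step in the converse. The obstacle is to pass from the purely field-theoretic quasi-galois hypothesis of Definition 3.1 --- that every irreducible polynomial with a root in the larger field splits completely --- to the scheme-theoretic comparability $x\in D_{1}\setminus D_{2}\Leftrightarrow x^{-1}\in D_{2}\setminus D_{1}$ required for essential equality. This forces one to verify that the dominating chart $U_{\alpha}^{\prime}$ absorbs all conjugates $A_{\alpha}^{\prime}$ simultaneously and uniformly over the choices of affine $V_{\alpha}\subseteq Y$ and over all conjugations. Controlling this uniformity, while keeping every ring realized as a subring of the single field $\Omega$ and respecting the stalk identifications of Condition 3.6, is the delicate core of the argument.
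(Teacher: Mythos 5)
Your proposal does not follow the paper's route: the paper disposes of both implications by citing Lemma 2.13 of \cite{An4} together with Theorem 3.5, so all of the actual content is outsourced to that external lemma. You instead attempt a direct verification, which is a legitimate thing to try, but as written the converse direction has a genuine gap. You assert that because $X$ and $Z$ both carry coverings with values in the single field $\Omega$, ``this forces the underlying spaces of $X$ and $Z$ to coincide.'' That does not follow from anything you have established: Definition 2.1 only supposes that the conjugate $Z$ has a \emph{reduced affine covering} with values in $\Omega$, not an affine patching, and even for a patching the fact that the charts of $Z$ are spectra of subrings of $\Omega$ does not by itself identify the underlying space of $Z$ with that of $X$ --- one must actually show that the quasi-galois-closed condition of Definition 3.3 forces every chart of $\mathcal{C}_{Z}$ (transported along $\sigma$) to be absorbed into the dominating chart $U_{\alpha}^{\prime}$, and that this pins down the topological space. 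Equality of underlying spaces is the first clause of essential equality, so it cannot be obtained as a byproduct of the comparability of rings; it has to be proved first.

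More importantly, your final paragraph concedes the point: you identify the passage from the field-theoretic quasi-galois condition to the comparability $x\in D_{1}\setminus D_{2}\Leftrightarrow x^{-1}\in D_{2}\setminus D_{1}$ as ``the delicate core of the argument'' and then do not carry it out. That step is precisely what the paper's citation of Lemma 2.13 in \cite{An4} is meant to supply, so omitting it leaves the converse unproved rather than merely unpolished. A secondary, smaller issue is in the forward direction: replacing each $\phi_{\alpha}$ by the identity changes the local chart as a triple, so you should say why the resulting patching inherits \emph{uniqueness} among maximal quasi-galois-closed patchings (e.g.\ by arguing that every quasi-galois-closed patching is in particular a quasi-galois-closed covering and hence sits inside the unique maximal covering of Theorem 3.5), rather than claiming it is inherited ``verbatim.''
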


\begin{proof}
It is immediate from \emph{Lemma 2.13} (in {\cite{An4}}) and \emph{Theorem 3.5} above.
\end{proof}


\begin{thebibliography}{9}

\bibitem{An} An, F-W. The affine structures on a ringed space and schemes.
eprint arXiv:0706.0579.

\bibitem{An2} An, F-W. Automorphism groups of quasi-galois closed arithmetic
schemes. eprint arXiv:0907.0842.

\bibitem{An3} An, F-W. On the existence of geometric models for function
fields in several variables. eprint arXiv:0909.1993.

\bibitem{An4} An, F-W. on the \'{e}tale fundamental groups of arithmetic
schemes, revised. eprint arXiv:0910.4646.

\bibitem{An5} An, F-W. On the arithmetic fundamental groups. eprint
arXiv:0910.0605.

\bibitem{An6} An, F-W. On the algebraic fundamental groups. eprint
arXiv:0910.4691.


\bibitem{EGA} Grothendieck, A; Dieudonn\'{e}, J. \'{E}l\'{e}ments de
G\'{e}oem\'{e}trie Alg\'{e}brique. vols I-IV, Pub. Math. de l'IHES,
1960-1967.

\bibitem{SGA1} Grothendieck, A; Raynaud, M. Rev$\hat{e}$tements $\acute{E}$tales et
Groupe Fondamental (SGA1). Springer, New York, 1971.

\bibitem{VS1} Suslin, A; Voevodsky, V. Singular homology of abstract
algebraic varieties. Invent. Math. 123 (1996), 61-94.

\bibitem{SV2} Suslin, A; Voevodsky, V. Relative cycles and
Chow sheaves, in \emph{Cycles, Transfers, and Motivic Homology
Theories}, Voevodsky, V; Suslin, A; Friedlander, E M. Annals of Math
Studies, Vol 143. Princeton University Press, Princeton, NJ, 2000.
\end{thebibliography}
\end{document}